\theoremstyle{plain}
\newtheorem{prop}{Proposition}[section]
\newtheorem{conj}[prop]{Conjecture}
\newtheorem{lemm}[prop]{Lemma}
\newtheorem{thm}[prop]{Theorem}
\theoremstyle{definition}
\newtheorem{defn}[prop]{Definition}
\newtheorem{rem}[prop]{Remark}
\def\mcg#1;#2{\Gamma_{#1,#2}}
\def\fg#1;#2{\Pi_{#1,#2}}
\def\tb#1;#2{\mathscr{K}_{\frac{#1}{#2}}}
\begin{document}

\title[A new property of quasi-alternating links]
{A new property of quasi-alternating links}

\keywords{quasi-alternating links, determinant, crossing number}

\author{Khaled Qazaqzeh}
\address{Department of Mathematics\\ Yarmouk University\\
Irbid, Jordan, 21163}
\email{qazaqzeh@yu.edu.jo}
\urladdr{http://faculty.yu.edu.jo/qazaqzeh}
\author{Balkees Qublan}
\address{Department of Mathematics\\ Yarmouk University\\
Irbid, Jordan, 21163}
\email{balkesqublan@yahoo.com}
\author{Abeer Jaradat }
\address{Department of Mathematics\\ Yarmouk University\\
Irbid, Jordan, 21163}
\email{abeerjaradat29@yahoo.com}

\thanks{The first two authors are supported by a grant from Yarmouk University.}
\date{16/05/2012}

\begin{abstract}
We show that the crossing number of any link that is known to be quasi-alternating is less than or equal to its determinant. Based on this, we conjecture that the crossing number of any quasi-alternating link is less than or equal to its determinant. Thus, if this conjecture is proved then it would give an easier obstruction for quasi-alternateness than the ones already known.
\end{abstract}

\maketitle

\section{introduction}

Quasi-alternating links were defined in \cite[Definition.\,3.9]{OS} as natural generalization of alternating links. It is well-known already that they share many properties with alternating links. Unfortunately, the recursive definition makes it impossible to show that a given link is not quasi-alternating.

\begin{defn}
The set $\mathcal{Q}$ of quasi-alternating links is the smallest set
satisfying the following properties:
\begin{itemize}
	\item The unknot belongs to $\mathcal{Q}$.
  \item If $L$ is link with a diagram containing a crossing $c$ such that 
\begin{enumerate}
\item both smoothings of the diagram of $L$ at the crossing $c$, $L_{0}$ and $L_{1}$ as in figure \ref{figure} belong to $\mathcal{Q}$, and 
\item $\det(L_{0}), \det(L_{1}) \geq 1$,
\item $\det(L) = \det(L_{0}) + \det(L_{1})$; then $L$ is in $\mathcal{Q}$ and in this case we say $L$ is quasi-alternating at the crossing $c$.
\end{enumerate}
\end{itemize}
\end{defn}

\begin{figure} [h]
 \includegraphics [width = 200pt, height =50pt] {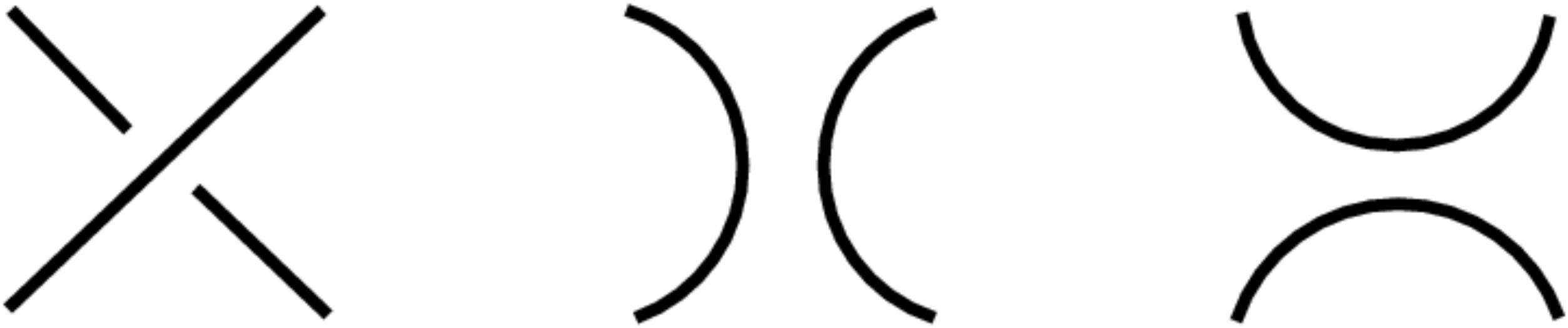}
 \caption{The link $L$ at the crossing $c$ and its smoothings $L_{0}$ and $L_{1}$ respectively.}\label{figure}
\end{figure}

The following is a list of properties hold for quasi-alternating links and hence a list of obstructions for a given link to be quasi-alternating that was first appeared in \cite[Page.\,1]{G}:
\begin{enumerate}
\item the branched double-cover of any quasi-alternating link
is an $L$-space \cite[Proposition.\,3.3]{OS};
\item the space of branched double-cover of any
quasi-alternating link bounds a negative definite $4$-manifold $W$ with $H_{1}(W) = 0$ \cite[Proof of Lemma.\,3.6]{OS};
\item the $\mathbb{Z}/2\mathbb{Z}$ knot Floer homology group of any
quasi-alternating link is thin \cite[Theorem.\,2]{MO};
\item the reduced ordinary Khovanov homology group of any
quasi-alternating link is thin \cite[Theorem.\,1]{MO};
 \item the reduced odd Khovanov homology group of any
quasi-alternating link is thin \cite[Remark after Proposition.\,5.2]{ORS}.

\end{enumerate}

One of the basic well-known properties of alternating links is $c(L) \leq  \det(L)$ as proved in Proposition \ref{basic}. We conjecture that this property holds for
quasi-alternating links. In support of this conjecture, we show this property for all links that are known to be quasi-alternating. In particular, we provide a table of knots up to 11 crossings that don not satisfy the above inequality and known to be not quasi-alternating.

If this conjecture is true, then we obtain a new property and
an easy obstruction for a link to be quasi-alternating. Moreover, this property will solve Conjecture 3.1 in \cite{G} that states that there are finitely many
quasi-alternating links of a given determinant. 

Finally, we provide two tables one of knots of 12 crossings, and the second one of links of 9 crossings or less that we think that they are not quasi-alternating based on our Conjecture \ref{main}.

\section{Main Results}

In this section, we show that the crossing number of any link that is known to be quasi-alternating is less than or equal to its determinant. Alternating links are quasi-alternating links as a result of \cite[Lemma.\,3.2]{OS}. Our result holds for the alternating links by the following proposition, but before that we state a well-known fact in graph theory.
\begin{rem}\label{graph}
The number of edges is less than or equal to the number of spanning trees in a connected graph with no loops and no bridges.
\end{rem}
\begin{prop}\label{basic}
If $L$ is an alternating link then $c(L) \leq \det(L)$.
\end{prop}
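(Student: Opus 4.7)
The plan is to recast the statement as a graph-theoretic inequality about the Tait (checkerboard) graph of a well-chosen diagram, and then invoke Remark~\ref{graph}. Choose a reduced alternating diagram $D$ of $L$. By the Kauffman--Murasugi--Thistlethwaite resolution of the Tait conjectures, such a $D$ realises the crossing number, so $c(D)=c(L)$. Shade the complementary regions of $D$ in checkerboard fashion and let $G$ be the associated Tait graph, whose vertices are the regions of one chosen colour and whose edges are the crossings of $D$. Then $|E(G)|$ equals the number of crossings of $D$, namely $c(L)$, while it is classical (via the spanning-tree expansion of the Kauffman bracket, say) that $\det(L)$ equals the number $\tau(G)$ of spanning trees of $G$. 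The desired inequality $c(L)\leq \det(L)$ thus becomes $|E(G)|\leq \tau(G)$, which is precisely Remark~\ref{graph} once the hypotheses are verified.

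To verify these hypotheses we need $G$ connected, loopless and bridgeless. We may restrict to non-split $L$: the unlink case is trivial, and a genuinely split alternating link would have $\det(L)=0$, so the statement must be read in the non-split setting (implicit in the paper's list of known examples). Non-split together with $D$ connected then makes $G$ connected. The remaining point is the standard dictionary between nugatory crossings and the combinatorics of the Tait graph: a crossing of $D$ is nugatory if and only if the corresponding edge of $G$ is a bridge, equivalently (since the two Tait graphs are planar duals) if and only if the corresponding edge of the dual Tait graph is a loop. Applied to both checkerboard colourings, reducedness of $D$ therefore rules out loops as well as bridges in $G$. Chaining everything together gives
\[
c(L)=c(D)=|E(G)|\leq \tau(G)=\det(L),
\]
which is exactly the inequality we wanted.

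The proof is therefore not conceptually deep; the main obstacle is bookkeeping, namely invoking three classical inputs cleanly—the Tait conjectures (to get $c(D)=c(L)$), the identity $\det(L)=\tau(G)$ for a connected alternating diagram, and the nugatory-crossing $\Leftrightarrow$ bridge/loop dictionary for the two Tait graphs—so that Remark~\ref{graph} applies exactly as stated.
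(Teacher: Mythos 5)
Your proof is correct and follows essentially the same route as the paper: reduce to the Tait graph of a reduced alternating diagram, identify $c(L)$ with the edge count and $\det(L)$ with the spanning-tree count, and apply Remark~\ref{graph}. You simply make explicit the inputs the paper leaves implicit (the Tait conjectures for $c(D)=c(L)$, the nugatory-crossing dictionary, and the non-split caveat).
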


\begin{proof}
Take a reduced alternating diagram of $L$, then the Tait's graph is connected and has no loops and no bridges. It is well-known that $c(L)$ represents the number of edges and $\det(L)$ represents the number of spanning trees of such Tait's graph. Finally, the result follows by Remark \ref{graph}.
\end{proof}

\begin{thm}\label{twist}
Let $L$ be a quasi-alternating link at some crossing $c$ with \[
c(D) \leq \det(L),
\] where $ c(D)$ is the crossing number of the quasi-alternating diagram $D$ of the link $L$.
Then the link $L'$ obtained by replacing the crossing $c$ in $L$ with an alternating tangle satisfies the inequality
\[
c(L') \leq \det(L').
\] 
\end{thm}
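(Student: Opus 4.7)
The plan is to build the tangle replacement one crossing at a time and track how the crossing number and determinant evolve, using the quasi-alternating relation at each stage. Write $L_{0},L_{1}$ for the two smoothings of $c$, so by hypothesis $\det(L)=\det(L_{0})+\det(L_{1})$ with $\det(L_{0}),\det(L_{1})\ge 1$. Let $n$ be the crossing number of the alternating tangle replacing $c$, and let $D'$ be the diagram of $L'$ obtained from $D$ by this replacement. Since a single crossing of $D$ has been replaced by $n$ crossings, the diagram $D'$ has exactly $c(D)+n-1$ crossings, so
\[
c(L')\le c(D')=c(D)+n-1.
\]

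Next, I would insert the crossings of the tangle one at a time, producing a sequence of links $L=\tilde{L}_{1},\tilde{L}_{2},\dots,\tilde{L}_{n}=L'$. I expect the alternating structure of the tangle to guarantee that at each step $\tilde{L}_{k}\to \tilde{L}_{k+1}$ the newly inserted crossing is quasi-alternating, with one smoothing equal to $\tilde{L}_{k}$ and the other equal to a fixed link which, independent of $k$, coincides with $L_{0}$ or $L_{1}$. Granting this, the quasi-alternating relation gives $\det(\tilde{L}_{k+1})=\det(\tilde{L}_{k})+\det(L_{i})$ for some fixed $i\in\{0,1\}$, and iterating with $\det(L_{i})\ge 1$ yields
\[
\det(L')\ge \det(L)+(n-1).
\]

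Combining this with the crossing number bound and the hypothesis $c(D)\le \det(L)$ then produces
\[
c(L')\le c(D)+(n-1)\le \det(L)+(n-1)\le \det(L'),
\]
which is the desired inequality. The hard part is justifying the iterative claim in the middle step: namely, that each partially constructed link is quasi-alternating at the newly added crossing with smoothings as described, and that the determinant relation is additive rather than cancelling. I expect this to reduce to a careful bookkeeping argument on the alternating sign pattern of the crossings inside the tangle, combined with the positivity of $\det(L_{0})$ and $\det(L_{1})$ inherited from the original quasi-alternating property of $L$ at $c$. The degenerate case $n=1$ (in which the tangle is either $c$ itself or its mirror) should be handled separately and immediately, since then $L'=L$ or else $L_{0}$ and $L_{1}$ merely swap roles, leaving both sides of the inequality unchanged.
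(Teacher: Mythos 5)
Your overall strategy is the same as the paper's: replace the crossing by the tangle one crossing at a time, observe that the diagram gains $n-1$ crossings, telescope the determinant using the additivity $\det = \det(L_0)+\det(L_1)$ at each new quasi-alternating crossing, and chain the inequalities $c(L') \le c(D)+(n-1) \le \det(L)+(n-1) \le \det(L')$. However, the step you yourself flag as ``the hard part'' is a genuine gap, and moreover the specific claim you make there is false for a general alternating (rational) tangle. It is only within a single twist region (an integer tangle) that the newly added crossing has one smoothing equal to $\tilde{L}_k$ and the other equal to the \emph{original} $L_0$ or $L_1$; as soon as the tangle's continued-fraction expansion changes direction (horizontal to vertical twisting or vice versa), the second smoothing of the new crossing is a link obtained by modifying the partially built tangle, not $L_0$ or $L_1$, so the relation $\det(\tilde{L}_{k+1})=\det(\tilde{L}_k)+\det(L_i)$ with a fixed $i$ does not hold. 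What you actually need is that each intermediate link is quasi-alternating at the new crossing with the second smoothing having determinant at least $1$; this is not something you can get from ``careful bookkeeping of the sign pattern'' alone --- it is the content of Champanerkar--Kofman's twisting theorem (\cite[Theorem.\,2.1]{CK}), which the paper invokes.

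The paper's proof closes this gap by a two-level induction: first it proves the statement for integer tangles exactly as you do (there your identification of the second smoothing is correct, and $\det(L')=\det(L)+|k-1|\det(L_1)\ge c(D)+|k-1|=c(D')$), and then it iterates over the twist regions of the rational tangle, observing that after each integer-tangle step the \emph{hypothesis} of the theorem, namely $c(D')\le\det(L')$ for the new diagram, is re-established with a new distinguished crossing and new smoothings. That re-establishment of the inductive invariant is the idea missing from your write-up: without it, your one-crossing-at-a-time telescoping has no justification once you leave the first twist region. To repair your proof, restrict the single-crossing argument to one twist region at a time, cite \cite[Theorem.\,2.1]{CK} for quasi-alternateness of the intermediate links, and make the inequality $c(D')\le\det(L')$ (for the diagram, not the link) the quantity carried through the induction on the length of the continued fraction.
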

\begin{proof}
We show the result for integer tangles first. For $n = 1$ the result follows by assumption since $c(L) \leq c(D)$. We let $L'$ be the link obtained by replacing the crossing $c$ in $L$ with an alternating integer tangle of length $k$ and without loss of generality we can assume that is as the crossing in figure \ref{figure}. As a result of \cite[Theorem.\,2.1]{CK}, $L'$ is a quasi-alternating link at any crossing of the integer tangle. Now, we smooth all the crossings of the tangle except one of the quasi-alternating diagram of $L'$ to obtain \[
\det(L') = \det(L) + |k - 1|\det(L_{1}) \geq c(D) + |k - 1| = c(D') \geq c(L'),\]
where $\det(L_{1}) \geq 1$ since it is quasi-alternating. Finally, we repeat this process to obtain the result for rational tangles since after each step we obtain the initial assumption of the theorem. In particular, $c(D') \leq \det(L')$ after each step as above. 
\end{proof}

We state the following well-known results to be used later.
\begin{prop}\label{det}
If $K_{1}, \ldots, K_{n}$ are links, then 
\[
\det(K_{1}\# \ldots \# K_{n}) = \prod_{i = 1}^{n}\det(K_{i}).
\]
\end{prop}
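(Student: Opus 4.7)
The plan is to reduce the statement to the case $n=2$ and then induct on the number of summands. Recall that the determinant of a link admits the description $\det(L) = |\Delta_L(-1)|$, where $\Delta_L$ denotes the Alexander--Conway polynomial of $L$. With this description in hand, the problem becomes the multiplicativity of $|\Delta_L(-1)|$ under connect sum.

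For the base case $n=2$, I would argue via Seifert matrices. Starting from Seifert surfaces $F_1$ and $F_2$ for $K_1$ and $K_2$, one obtains a Seifert surface $F$ for $K_1 \# K_2$ by performing a boundary connect sum, an operation that does not alter the first homology of either piece. Choosing a basis for $H_1(F)$ by juxtaposing bases for $H_1(F_1)$ and $H_1(F_2)$, the associated Seifert matrix is block diagonal, $V = V_1 \oplus V_2$. Since $V + V^T = (V_1 + V_1^T) \oplus (V_2 + V_2^T)$, the determinant factors:
\[
\det(K_1 \# K_2) = |\det(V+V^T)| = |\det(V_1+V_1^T)| \cdot |\det(V_2+V_2^T)| = \det(K_1)\det(K_2).
\]

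With the $n=2$ case settled, the general statement follows by a routine induction: write $K_1 \# \cdots \# K_n = (K_1 \# \cdots \# K_{n-1}) \# K_n$ and apply the inductive hypothesis together with the base case.

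The only subtlety worth flagging is that for multicomponent links the connect sum depends on a choice of components to splice along, but the boundary-connect-sum construction of $F$ treats each summand symmetrically, so the resulting identity is independent of this choice. An equally efficient alternative would be to run the same argument through the Kauffman bracket (or equivalently $|V_L(-1)|$), which is multiplicative under connect sum after the standard normalization; either route delivers the identity with essentially no computation, so no step presents a serious obstacle.
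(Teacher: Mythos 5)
Your argument is correct, but it takes a different route from the paper. The paper's proof is a one-liner: it invokes the multiplicativity of the Jones polynomial under connected sum and the identity $\det(L)=|V_L(-1)|$. You instead work through the Alexander--Conway polynomial via Seifert matrices, observing that a boundary connect sum of Seifert surfaces yields a block-diagonal Seifert matrix $V=V_1\oplus V_2$, so that $|\det(V+V^T)|$ factors; you then induct on $n$. Both routes are standard and both are valid. Your approach has the advantage of being self-contained and of making explicit why the identity is independent of the choice of components along which the summands are spliced (a point the paper silently elides); the paper's approach is shorter because it outsources everything to a known property of $V_L$. Note that you yourself flag the Kauffman-bracket/Jones-polynomial route as an ``equally efficient alternative'' --- that alternative is precisely the proof the paper gives. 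The only cosmetic caveat in your write-up is the phrase that the boundary connect sum ``does not alter the first homology of either piece''; what you actually need, and what is true, is that $H_1(F)\cong H_1(F_1)\oplus H_1(F_2)$ with the Seifert pairing splitting accordingly, which your block-diagonal claim already encodes.
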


\begin{proof}
The result follows since the Jones polynomial is multiplicative under connected sum of links.
\end{proof}

\begin{prop}\label{crossing}\cite{Ka},\cite{LM},\cite{M},\cite{T}
If $K_{1}, \ldots, K_{n}$ are alternating links, then  
\[
c(K_{1}\# \ldots \# K_{n}) = \sum_{i = 1}^{n}c(K_{i}).
\]
\end{prop}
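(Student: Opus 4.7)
The plan is to reduce the statement to the first Tait conjecture, namely the Kauffman--Murasugi--Thistlethwaite theorem which asserts that any reduced alternating diagram of an alternating link realizes the crossing number. The upper bound $c(K_{1}\#\cdots\#K_{n}) \leq \sum_{i=1}^{n} c(K_{i})$ is immediate: take minimum-crossing diagrams $D_{i}$ for each $K_{i}$ and form their connected sum in the standard way (cutting a small arc from the outer region of each and splicing), which produces a diagram of $K_{1}\#\cdots\#K_{n}$ with $\sum_{i=1}^{n} c(K_{i})$ crossings. So the content of the proposition lies in the reverse inequality.

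For the lower bound, I would take each $D_{i}$ to be a \emph{reduced} alternating diagram of $K_{i}$, which by the Tait conjecture satisfies $c(D_{i}) = c(K_{i})$. The key step is then to verify that the connected sum diagram $D$ constructed above is itself a reduced alternating diagram. Alternateness is easy: the two strands being spliced can be chosen so that the over/under pattern is compatible across the splice, and in fact after possibly reflecting one summand the combined diagram inherits an alternating coloring from the summands. Reducedness amounts to checking that no crossing of $D$ is nugatory; since the splice adds no crossings and each $D_{i}$ has no nugatory crossings, a hypothetical nugatory crossing of $D$ would already be nugatory in one of the $D_{i}$, a contradiction.

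Once $D$ is known to be a reduced alternating diagram of $K_{1}\#\cdots\#K_{n}$, the Tait conjecture applied to the connected sum gives
\[
c(K_{1}\#\cdots\#K_{n}) \;=\; c(D) \;=\; \sum_{i=1}^{n} c(D_{i}) \;=\; \sum_{i=1}^{n} c(K_{i}),
\]
which completes the proof. The only real obstacle is the invocation of the Tait conjecture, which is the deep input; the verification that the connected sum of reduced alternating diagrams remains reduced and alternating is routine. Alternatively, one could argue via Kauffman's identity $c(L) = \mathrm{span}(V_{L})$ for alternating $L$, combined with the multiplicativity of the Jones polynomial under connected sum (which yields additivity of spans), to obtain the same lower bound against the trivial upper bound; this avoids explicitly discussing reducedness of the combined diagram but still relies on the same underlying theorem.
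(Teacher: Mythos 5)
Your argument is correct and is essentially the paper's own approach: the paper gives no proof but simply cites Kauffman, Lickorish--Millett, Murasugi and Thistlethwaite, i.e.\ it treats the statement as the standard consequence of the first Tait conjecture (reduced alternating diagrams realize the crossing number), which is exactly the deduction you carry out. Both your main route (checking that the connected sum of reduced alternating diagrams is again reduced and alternating) and your alternative via the span of the Jones polynomial are the standard arguments underlying those citations.
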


\begin{lemm}\label{simple}
If $x_{1}, \ldots, x_{n}$ are positive integers, then 
\[
\prod_{i=1}^{n} x_{i} \geq  \sum_{i = 1, x_{i} > 1}^{n} x_{i}.
\]
\end{lemm}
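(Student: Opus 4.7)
The plan is to prune away the trivial terms, reducing the inequality to one about factors that are at least $2$, and then finish by an elementary induction (or a direct estimate). Let $S = \{i : x_i > 1\}$ and write $k = |S|$. The terms with $x_i = 1$ contribute $1$ to the product and $0$ to the right-hand sum, so removing them leaves the inequality unchanged. Thus it suffices to show that whenever $y_1,\ldots,y_k$ are integers with $y_i \geq 2$, one has
\[
\prod_{i=1}^{k} y_i \;\geq\; \sum_{i=1}^{k} y_i.
\]

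For this reduced statement I would induct on $k$. The cases $k = 0$ (empty product $1 \geq 0$) and $k = 1$ (equality) are immediate. For the induction step, assume the inequality for $k-1$ factors and write
\[
\prod_{i=1}^{k} y_i \;=\; y_k \prod_{i=1}^{k-1} y_i \;\geq\; y_k \sum_{i=1}^{k-1} y_i.
\]
The desired conclusion $y_k \sum_{i=1}^{k-1} y_i \geq y_k + \sum_{i=1}^{k-1} y_i$ is equivalent to $(y_k - 1)\bigl(\sum_{i=1}^{k-1} y_i - 1\bigr) \geq 1$, which holds because $y_k - 1 \geq 1$ and $\sum_{i=1}^{k-1} y_i \geq 2$.

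Alternatively, one can give a one-line non-inductive proof: since each $y_i \geq 2$, we have $\prod y_i \geq 2^{k-1}\max_i y_i$, while $\sum y_i \leq k \max_i y_i$, so the inequality reduces to $2^{k-1} \geq k$, valid for all $k \geq 1$.

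There is no real obstacle here — the statement is elementary and the only mild subtlety is being careful with the empty-product/empty-sum boundary case ($k = 0$) and with the equality case (exactly one $x_i > 1$). The main work of the lemma is conceptual: it isolates the combinatorial inequality needed downstream, presumably to handle connected sums by combining Propositions \ref{det} and \ref{crossing} with the summand-wise bound $c(K_i) \leq \det(K_i)$.
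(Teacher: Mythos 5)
Your proof is correct and follows essentially the same route as the paper: discard the indices with $x_i = 1$ (they multiply the product by $1$ and contribute nothing to the sum), then induct on the number $k$ of remaining factors via $y_k \prod_{i<k} y_i \geq y_k \sum_{i<k} y_i \geq y_k + \sum_{i<k} y_i$. The only difference is that you spell out the justification $(y_k-1)\bigl(\sum_{i<k} y_i - 1\bigr) \geq 1$ for the last inequality, which the paper leaves implicit.
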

\begin{proof}
The result is clear if $x_{1} = \ldots = x_{n} = 1$. To consider the other cases, we reorder such that $x_{1}\geq x_{2} \geq \ldots \geq x_{m} > 1$ and $x_{m+1} = \ldots = x_{n} = 1$.  We use induction on $m$ and note that the result holds for $m = 1$.
Now we assume that the result holds for $m - 1$ and we want to show that the result holds for $m$.
\[
\prod_{i = 1}^{m} x_{i} = x_{m} \prod_{i = 1}^{m - 1}x_{i} \geq x_{m} \sum_{i = 1}^{m-1} x_{i}\geq x_{m} + \sum_{i = 1}^{m - 1} x_{i} = \sum_{i = 1}^{m} x_{i}.
\]
\end{proof}
\begin{prop}\label{new}
For $p_{1},\ldots, p_{n}, q \geq 2$ and $ q > \min \{ p_{1}, \ldots, p_{n}\}$, we have 
\[
c(L) \leq \det(L),
\]
where $L$ is the pretzel link $P(p_{1},\ldots, p_{n}, -q)$.
\end{prop}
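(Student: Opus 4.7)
The plan is to combine the classical determinant formula for pretzel links with the inequality of Lemma \ref{simple}. Recall that
\[
\det\bigl(P(a_{1},\dots,a_{m})\bigr)=\Bigl|\sum_{i=1}^{m}\prod_{j\neq i}a_{j}\Bigr|.
\]
Specializing to $L=P(p_{1},\ldots,p_{n},-q)$, writing $P:=\prod_{i=1}^{n}p_{i}$ and $S:=\sum_{k=1}^{n}\prod_{j\neq k}p_{j}$, and assuming without loss of generality that $p_{1}=\min\{p_{i}\}$ so that $q\geq p_{1}+1$, one has $q\prod_{j\geq 2}p_{j}\geq(p_{1}+1)\prod_{j\geq 2}p_{j}>P$, so already the single $k=1$ summand of $qS$ exceeds $P$; the formula therefore simplifies to $\det(L)=qS-P$.

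The crossing number is bounded by the standard pretzel diagram as $c(L)\leq q+\sum_{i=1}^{n}p_{i}$, so the proposition reduces to the purely algebraic inequality
\[
q(S-1)\geq P+\sum_{i=1}^{n}p_{i}.
\]
I would substitute $q\geq p_{1}+1$ and peel off the $k=1$ term of $S$. Setting $T:=\sum_{k=2}^{n}\prod_{j\in\{2,\ldots,n\}\setminus\{k\}}p_{j}$, direct expansion gives $p_{1}S-P=p_{1}^{2}T$ and $S-\prod_{j\geq 2}p_{j}=p_{1}T$, so the target collapses to
\[
p_{1}(p_{1}+1)\,T+\prod_{j=2}^{n}p_{j}\geq 2p_{1}+1+\sum_{j=2}^{n}p_{j}.
\]

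The main obstacle is this last inequality, and it is precisely where Lemma \ref{simple} enters: applied to the integers $p_{2},\ldots,p_{n}\geq 2$ it yields $\prod_{j\geq 2}p_{j}\geq\sum_{j\geq 2}p_{j}$, which cancels $\sum_{j\geq 2}p_{j}$ from the right-hand side. What is left is the elementary estimate $p_{1}(p_{1}+1)\,T\geq 2p_{1}+1$, which is immediate for $n\geq 2$ since $T\geq 1$ and $p_{1}\geq 2$ (equivalently, $p_{1}^{2}-p_{1}-1\geq 0$). The degenerate case $n=1$ is trivial: $P(p_{1},-q)$ reduces to the $(2,q-p_{1})$ torus link, whose crossing number and determinant both equal $q-p_{1}$.
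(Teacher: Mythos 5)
Your proof is correct, but it takes a genuinely different route from the paper's. The paper argues by induction on $n$ entirely within the quasi-alternating framework: it first treats $P(p_{1},\ldots,p_{n-1},1,-q)$, applies the smoothing identity $\det(L)=\det(L_{0})+\det(L_{1})$ at the single crossing of the $n$-th tassel together with the connected-sum formulas (Propositions \ref{det} and \ref{crossing}) and Lemma \ref{simple} to get $\det(L)\geq c(D)$, and then invokes Theorem \ref{twist} to replace that single crossing by a tassel of $p_{n}$ half-twists. You instead compute $\det(L)=qS-P$ in closed form from the classical pretzel determinant formula and reduce the whole statement to one algebraic inequality, settled by Lemma \ref{simple} plus the elementary bound $p_{1}^{2}-p_{1}-1\geq 0$; your argument needs no induction on $n$ and no input from quasi-alternateness, and it uses the hypothesis $q>\min\{p_{i}\}$ exactly where it is needed (without $q\geq p_{1}+1$ the inequality $q(S-1)\geq P+\sum_{i}p_{i}$ can fail, e.g.\ for $P(10,10,-2)$). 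The paper explicitly anticipates your alternative in the remark following the proposition (``another proof \ldots\ using the famous formulas for the determinant and the crossing number''). The trade-off: the paper's route reuses machinery (Theorem \ref{twist}) needed elsewhere and stays in the spirit of the quasi-alternating recursion, while yours is shorter, self-contained, and makes the role of the hypothesis transparent. The only nitpick is the base case $n=1$: when $q-p_{1}=1$ the link $T(2,p_{1}-q)$ is the unknot, so the crossing number is $0$ and the determinant is $1$ rather than both equalling $q-p_{1}$; the inequality of course still holds, but the sentence as written is slightly imprecise.
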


\begin{proof}
For $n = 1$ the statement is true by Proposition \ref{basic} since the pretzel link is the torus alternating link $T(2, p-q)$ if $p \neq q$.
Now we show that the result holds for $n \geq 2$. Let $ L' = P(p_{1},\ldots, p_{n - 1}, p_{n},  -q)$ where $ q > \min \{p_{1}, \ldots, p_{n - 1} \}$, then $L'$ is quasi-alternating as a result of \cite[Theorem.\,3.2]{CK} and later by \cite[Theorem.\,1.4 and Proposition.\,2.2]{G}.

We consider $L = P(p_{1},\ldots, p_{n - 1}, 1 , -q)$. The authors of \cite{CK} show that $L$ is quasi-alternating at the only crossing in the $n$-th tassel in \cite[proof of Theorem.\,3.2]{CK}. Therefore, we obtain 
\begin{align*}
\det(L) = \det(L_{0}) + \det(L_{1}) & =  \det(T(2,p_{1})\# \ldots \# T(2,p_{n - 1}) \# T(2, -q))  +  \det(L_{1})\\ & = q \prod_{i = 1}^{n - 1} p_{i} + \det(L_{1}) 
 \geq \sum_{i=1}^{n-1} p_{i} + q  + 1 = c(L_{0}) + 1 =c(D) \geq c(L),
\end{align*}
where $ L_{1} = P(p_{1},\ldots, p_{n - 1}, -q)$ which is quasi-alternating by the induction hypothesis and $D$ is the pretzel diagram of $L$.
Finally the result follows by applying Theorem \ref{twist} on the link $L$ on the crossing in the $n$-th tassel.

\end{proof}

\begin{rem}
Another proof of above proposition can be given by induction on $n$ and using the famous formulas for the determinant and the crossing number of the given pretzel link. 
\end{rem}
The relevant notation and facts concerning Montesinos and pretzel links in this paper follows \cite[Section 3.2]{OSS}.
\begin{lemm}\label{just}
If $n \geq 0, k \geq 1$ with $ n + k \geq 2$, all $p_{i} \geq 2$ and all $q_{j} \geq 3$, then $\det(M(0;(p_{1},1),\ldots , \\ (p_{n},1), (q_{1}, q_{1}-1) , \ldots, (q_{k}, q_{k}-1)) \geq \sum _{i =1}^{n} p_{i} + \sum_{j =1}^{k} q_{j} + k + 2$.
\end{lemm}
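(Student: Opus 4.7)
The plan is to work directly from the closed-form expression for the determinant of a Montesinos link recorded in \cite[Section 3.2]{OSS}. Writing $P = \prod_{i=1}^{n} p_i$ and $Q = \prod_{j=1}^{k} q_j$, the tangles $1/p_i$ and $(q_j-1)/q_j$ together with $e = 0$ give
\[
\det(L) = PQ\left(\sum_{i=1}^{n}\frac{1}{p_i} + \sum_{j=1}^{k}\frac{q_j-1}{q_j}\right) = Q\sum_{i=1}^{n}\frac{P}{p_i} + P\sum_{j=1}^{k}(q_j-1)\frac{Q}{q_j},
\]
displaying $\det(L)$ as a sum of positive integers.

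I would then induct on $n$ with $k$ held fixed. In the base case $n=0$ we have $k \geq 2$, so each quotient $Q/q_j$ is at least $3^{k-1} \geq 3$, whence $\det(L) \geq 3\sum_{j}(q_j-1) = 3\sum q_j - 3k$. The target inequality then reduces to $\sum q_j \geq 2k+1$, which is immediate from $q_j \geq 3$ and $k \geq 2$.

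For the inductive step, a direct manipulation of the formula above yields the recursion
\[
\det(L_n) = p_n\,\det(L_{n-1}) + P_{n-1}\,Q,
\]
where $L_{n-1}$ is obtained by removing the $n$-th tangle $(p_n,1)$ and $P_{n-1} = \prod_{i<n} p_i$. Applying the inductive hypothesis to $L_{n-1}$ reduces the desired estimate for $L_n$ to the inequality
\[
(p_n-1)\left(\sum_{i<n} p_i + \sum_{j=1}^{k} q_j + k + 2\right) + P_{n-1}Q \geq p_n,
\]
which follows at once from $p_n \geq 2$, $q_j \geq 3$, $k \geq 1$, and $P_{n-1} Q \geq 1$.

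The main obstacle is the boundary configuration $(n,k)=(1,1)$, which the induction on $n$ cannot reach because the would-be smaller link has only a single rational tangle and falls outside the lemma's hypothesis $n+k \geq 2$. For this case I would compute $\det(L) = p_1 q_1 + q_1 - p_1$ directly and verify the target inequality by a short case analysis on the small values of $p_1 \geq 2$ and $q_1 \geq 3$; this is the step where the constants need to be checked most carefully, and where one may need to appeal to the precise Montesinos convention of \cite{OSS} to ensure the bound is tight.
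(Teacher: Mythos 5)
Your determinant formula is the correct one: for $M(0;(p_1,1),\ldots,(p_n,1),(q_1,q_1-1),\ldots,(q_k,q_k-1))$ the Seifert-invariant (or spanning-tree) computation gives $\det(L)=PQ\bigl(\sum_i 1/p_i+\sum_j (q_j-1)/q_j\bigr)$, and both your base case $n=0$, $k\ge 2$ and your inductive step via the recursion $\det(L_n)=p_n\det(L_{n-1})+P_{n-1}Q$ are sound. The problem is exactly the case you deferred. For $(n,k)=(1,1)$ your formula gives $\det(L)=p_1q_1+q_1-p_1$, the target is $p_1+q_1+1+2$, and the inequality reduces to $p_1(q_1-2)\ge 3$. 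For $p_1=2$, $q_1=3$ this reads $2\ge 3$: concretely the link is $M(0;(2,1),(3,2))=P(1;2,-3)$, a two-bridge knot whose checkerboard graph has exactly $7$ spanning trees, while the right-hand side of the lemma is $2+3+1+2=8$. So the ``short case analysis'' you postponed cannot be completed; the lemma as stated is false at that boundary configuration, and your proposal does not (and cannot) close this gap.

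For comparison, the paper's own proof is a one-line computation asserting $\det(L)=\prod_i p_i\prod_j(q_j+1)$ followed by Lemma \ref{simple}; that determinant formula disagrees with yours and with the direct count (it gives $8$ instead of $7$ in the example above), which is where the paper's argument goes wrong. Your blind attempt is therefore structurally different (induction on $n$ rather than a closed-form estimate) and more honest: it isolates the one place the claimed bound genuinely fails. A repair would have to exclude or weaken the case $(n,k,p_1,q_1)=(1,1,2,3)$ --- the condition $p_1(q_1-2)\ge 3$ holds for every other admissible pair, and for $n+k\ge 3$ your inductive step has enough slack to absorb a corrected base case --- and the application of the lemma in the pretzel theorem would then need to be rechecked for that configuration.
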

\begin{proof}
We let $L = M(0;(p_{1},1),\ldots, (p_{n},1), (q_{1}, q_{1}-1) , \ldots, (q_{k}, q_{k}-1))$. It is clear that the corresponding Montesinos link is alternating. Therefore, the determinant is equal to the number of spanning trees of the Tait graph. Thus
\begin{align*}
\det(L) = \prod_{i=1}^{n}p_{i}\prod_{j=1}^{k}(q_{j} + 1) & \geq \prod_{i=1}^{n}p_{i}(\prod_{j=1}^{k}q_{j} + k)\\ & = \prod_{i=1}^{n}p_{i}\prod_{j=1}^{k}q_{j} + (\prod_{i=1}^{n}p_{i})k \geq \sum _{i =1}^{n} p_{i} + \sum_{j =1}^{k} q_{j} + k + 2,
\end{align*}
where the last inequality follows by Lemma \ref{simple}.
\end{proof}
\begin{thm}
The crossing number of any pretzel quasi-alternating link is less than or equal to its determinant.
\end{thm}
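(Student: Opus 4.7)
\emph{Proof plan.} My strategy is to split pretzel quasi-alternating links into cases according to the classification of Champanerkar--Kofman and Greene, and then dispatch each case with one of the tools already developed in the paper. Up to mirror image, every quasi-alternating pretzel link falls into one of the following families: (a) an alternating pretzel $P(p_1,\ldots,p_n)$ (all $p_i$ of the same sign); (b) a pretzel of the form $P(p_1,\ldots,p_{n-1},-q)$ with $p_i\geq 2$, $q\geq 2$, and $q>\min\{p_1,\ldots,p_{n-1}\}$; or (c) a pretzel some of whose tassels consist of a single crossing, which becomes, after absorbing the $\pm 1$ tassels into adjacent rational tangles via the identity $-q \sim q/(q-1)$ up to the framing, a reduced alternating Montesinos link of the form $M(0;(p_1,1),\ldots,(p_n,1),(q_1,q_1-1),\ldots,(q_k,q_k-1))$, which is precisely the shape appearing in Lemma \ref{just}.

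In case (a), Proposition \ref{basic} gives $c(L)\leq\det(L)$ immediately. In case (b), Proposition \ref{new} is tailor-made and yields the desired inequality directly. In case (c), the resulting Montesinos diagram is alternating, so by the Kauffman--Murasugi--Thistlethwaite theorem (used in Proposition \ref{crossing}) its crossing number equals that of the reduced alternating diagram, namely $\sum_{i}p_{i}+\sum_{j}q_{j}$. Lemma \ref{just} then supplies
\[
\det(L)\;\geq\;\sum_{i=1}^{n}p_{i}+\sum_{j=1}^{k}q_{j}+k+2\;\geq\;c(L),
\]
which closes the argument.

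The step I expect to be the main obstacle is case (c): realising the pretzel link concretely as a Montesinos link of the shape handled by Lemma \ref{just}, keeping careful track of which original tassels survive as integer tangles $(p_i,1)$ and which reassemble into the non-integer tangles $(q_j,q_j-1)$ after the $\pm 1$ tassels are absorbed. A secondary subtlety is ensuring that the classification covers every quasi-alternating pretzel and that one does not inadvertently apply two of the three cases to the same link; this is guaranteed by working with the reduced Montesinos representation, to which the classification of Champanerkar--Kofman and Greene attaches a unique normal form.
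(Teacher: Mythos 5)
Your case analysis does not exhaust Greene's classification of quasi-alternating pretzel links, and the omitted family is exactly the one that requires a new argument. Writing $L = P(e;p_{1},\ldots,p_{n},-q_{1},\ldots,-q_{m})$ with all $p_{i}\geq 2$ and $q_{j}\geq 3$ as in \cite[Theorem.\,1.4]{G}, your cases (a) and (c) cover $e\geq m$ (where the link admits an alternating Montesinos presentation $M(e-m;(p_{i},1),(q_{j},q_{j}-1))$, so Proposition \ref{basic} suffices and the appeal to Lemma \ref{just} is redundant), and your case (b) covers $e=0$ with $m=1$ or $n=1$ via Proposition \ref{new}. But Greene's case $e=m-1>0$ is covered by none of these: there the equivalent Montesinos description is $M(-1;(p_{i},1),(q_{j},q_{j}-1))$, which is not alternating, has at least two negative tassels, and is not a mirror of the other cases. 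Your claim that ``up to mirror image, every quasi-alternating pretzel link falls into one of the following families'' is therefore false, and the proof as written does not close.

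The paper's treatment of this missing case is the heart of its proof and explains why Lemma \ref{just} and Theorem \ref{twist} are stated in the form they are. One inducts on $m$: for $L=P(k;p_{1},\ldots,p_{n},-q_{1},\ldots,-q_{k},-3)$ one smooths a crossing of the last tassel, so that $L_{0}=P(k;p_{1},\ldots,p_{n},-q_{1},\ldots,-q_{k})$ is precisely the alternating Montesinos link of Lemma \ref{just} (giving $\det(L_{0})\geq\sum p_{i}+\sum q_{j}+k+2$) while $L_{1}=P(k-1;2,p_{1},\ldots,p_{n},-q_{1},\ldots,-q_{k})$ is quasi-alternating with $\det(L_{1})\geq 1$ by the inductive hypothesis (or by Proposition \ref{new} when $k=1$); summing gives $\det(L)\geq c(D)$ for the pretzel diagram $D$, and Theorem \ref{twist} then upgrades the last tassel from $-3$ to any $-q_{m}\leq -3$. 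In your proposal Lemma \ref{just} is used only to re-derive an inequality for an alternating link already handled by Proposition \ref{basic}; its actual role is as the determinant lower bound for the smoothing $L_{0}$ in this induction. To repair your argument you must add this fourth case and supply essentially this induction (or an equivalent computation of $\det$ and $c$ for the family $e=m-1>0$).
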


\begin{proof}

The complete characterization of quasi-alternating pretzel links is given in \cite[Theorem.\,1.4]{G} which states that the pretzel link 
$L  = P(e;p_{1}, \ldots, p_{n}, -q_{1}, \ldots, -q_{m}) = M(e;(p_{1},1), \ldots$ $, \\ (p_{n},1), (q_{1}, -1) , \ldots, (q_{m}, -1))$ 
with $e, n, m \geq 0$, all $p_{i} \geq 2$, and all $q_{j} \geq 3$ is quasi-alternating iff 
\begin{enumerate}
\item $e > m - 1$;
\item $e = m -1 > 0$;
\item $e = 0, n = 1$, and $p_{1} > \min \{q_{1},\ldots, q_{m}\}$; or 
\item $e = 0, m = 1$, and $q_{1} > \min \{p_{1},\ldots, p_{n}\}$.
\end{enumerate}
We will prove the theorem by checking the first two case and the other cases will follow from Proposition \ref{new}.
\begin{enumerate}
\item If $ e > m - 1$, then $L$ has an equivalent description $M(e-m;(p_{1},1), \ldots, (p_{n},1), (q_{1}, q_{1} - 1) , \ldots, (q_{m}, q_{m}-1))$ with an alternating associated diagram. Therefore, the result in this case follows by Proposition \ref{basic}.
\item We show the result by induction on $m$.  
Now we assume that the result holds for $m = k$. We want to show that the result holds for $m = k + 1$. Take $q_{k + 1} = 3$, so $L  = P(k;p_{1}, \ldots, p_{n}, -q_{1}, \ldots, -q_{k}, -3)$ with $c(D) = k + p_{1} + \ldots + p_{n} + q_{1} + \ldots + q_{k} + 3$, where $D$ is the pretzel diagram of $L$. If we show that $c(D) \leq \det(L)$, then the result will follow by Theorem \ref{twist}. As a result \cite[Proof of Theorem.\,1.4]{G}, $L$ is quasi-alternating at the any crossing in the last tassel.  
We smooth one of the crossings in the last tassel 
\begin{align*}
\det(L) = \det(L_{0}) + \det(L_{1}) & \geq \sum_{i = 1}^{n} p_{i} + \sum_{j = 1}^{k} q_{j} + k + 2 + 1\\ & =  \sum_{i = 1}^{n} p_{i} + \sum_{j = 1}^{k} q_{j} + k + 3 = c(D),
\end{align*}
where the only inequality follows by Lemma \ref{just} for 
$L_{0} = P(k;p_{1}, \ldots, p_{n}, -q_{1}, \ldots, -q_{k}) = M(0;(p_{1},1),\ldots, (p_{n},1), (q_{1}, q_{1}-1) , \ldots, (q_{k}, q_{k}-1))$ 
and $L_{1} = P(k;p_{1}, \ldots, p_{n}, -q_{1}, \ldots, \\ -q_{k}, -2) = P(k - 1;2,p_{1}, \ldots, p_{n}, -q_{1}, \ldots, -q_{k})$ with $\det(L_{1}) \geq 1$ by induction if $k - 1 > 0$ and by Proposition \ref{new} if $ k - 1 = 0$.

\end{enumerate}
\end{proof}

\begin{thm}
The crossing number of any quasi-alternating Montesinos link mentioned in \cite[Theorem.\,1.2]{W} is less than or equal to its determinant.
\end{thm}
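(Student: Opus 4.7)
The plan is to follow the same template as the preceding theorem on quasi-alternating pretzel links. First, I would invoke the Widmer classification \cite[Theorem.\,1.2]{W}, which characterizes quasi-alternating Montesinos links $L = M(e;(\alpha_1,\beta_1),\ldots,(\alpha_r,\beta_r))$ by explicit arithmetic conditions on $e$ and the rational parameters $(\alpha_i,\beta_i)$. The list splits into a handful of cases; some of them correspond to Montesinos diagrams that are manifestly alternating, and for those the inequality $c(L) \leq \det(L)$ is immediate from Proposition \ref{basic}.

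For the remaining non-alternating cases, the idea is to reduce to the pretzel theorem via Theorem \ref{twist}. Any rational tangle is built from a single integer tangle by iterated insertion of alternating tangles, so after collapsing each rational tangle $(\alpha_i,\beta_i)$ in $L$ to an appropriate integer tangle one obtains a ``pretzel skeleton'' $L^\flat$, and $L$ is recovered from $L^\flat$ by finitely many alternating-tangle substitutions. If $L^\flat$ falls inside one of the quasi-alternating pretzel families already treated, so that $c(L^\flat) \leq \det(L^\flat)$ by the preceding theorem, and if each intermediate link remains quasi-alternating at the crossing being replaced, then iterating Theorem \ref{twist} propagates the inequality from $L^\flat$ up to $L$ exactly as in the proofs of Proposition \ref{new} and the pretzel theorem.

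The main obstacle I expect is the case-by-case verification that the pretzel skeleton produced from Widmer's data does indeed belong to the quasi-alternating pretzel families covered by the preceding theorem, and that the property of being quasi-alternating at the replaced crossing is preserved at every intermediate stage so that Theorem \ref{twist} keeps applying. A subtler technical point is the degenerate situation where a collapse produces a tassel of length $0$ or $\pm 1$: there the number of tassels in $L^\flat$ drops, the resulting skeleton may lie outside the hypotheses of the pretzel theorem, and one would have to establish a separate base case $c(D) \leq \det(L)$ directly, presumably through a determinant lower bound in the spirit of Lemma \ref{just} combined with Lemma \ref{simple} applied to the products of tangle determinants. Once those degenerate configurations are handled as base cases, the induction driven by Theorem \ref{twist} should close out each remaining family of Widmer's list.
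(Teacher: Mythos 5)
Your overall template --- collapse a tangle to a single crossing, verify the inequality for the collapsed link, then re-expand with Theorem \ref{twist} --- is indeed the engine of the paper's proof. But the specific reduction you propose, collapsing \emph{every} rational tangle to produce a ``pretzel skeleton'' $L^\flat$ and then re-expanding them one at a time, has a gap you name but do not close, and it is not a gap that the cited results let you close. Theorem \ref{twist} only applies at a crossing where the intermediate link is known to be quasi-alternating, and Widmer's theorem certifies quasi-alternateness only at the crossings of the one distinguished tangle $R$ (via \cite[Theorem.\,2.1]{CK} applied to the link $\widehat{L}$ in which $R$ is a single crossing). For the other tangles in the Montesinos presentation there is no statement that the collapsed link is quasi-alternating at the collapsed crossing, so the iterated re-expansion from $L^\flat$ cannot be launched. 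Your proposal essentially assumes the hardest step.

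The paper avoids this entirely: for each of the three families $L^{1}=L(a_{1}a_{2},R,-n)$, $L^{2}=L(a_{1}a_{2},R,(-c_{1})(-c_{2}))$, $L^{3}=L(a_{1}a_{2}a_{3},R,-n)$ of \cite[Theorem.\,1.2]{W}, it collapses \emph{only} $R$ to get $\widehat{L^{i}}$, which Widmer shows is quasi-alternating at that single crossing. The base inequality $c(\widehat{D^{i}})\leq\det(\widehat{L^{i}})$ is then checked directly, not by reduction to the pretzel theorem: smoothing the distinguished crossing exhibits $\widehat{L^{i}_{0}}$ as a connected sum of torus and $2$-bridge links, so $\det(\widehat{L^{i}_{0}})$ is an explicit product by Proposition \ref{det} (e.g.\ $n(1+a_{1}a_{2})$ in the first case), which dominates the crossing count by Lemma \ref{simple}; adding $\det(\widehat{L^{i}_{1}})\geq 1$ finishes the base case, and a single application of Theorem \ref{twist} at the crossing replaced by $R$ gives the result for $L^{i}$. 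Neither Lemma \ref{just} nor the pretzel theorem enters. To repair your argument you would either have to restrict the collapsing to $R$ alone (at which point you recover the paper's proof and must still supply the explicit determinant estimates), or prove the missing quasi-alternateness statements for the other collapsed crossings, which is not available from \cite{W} or \cite{CK}.
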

\begin{proof}
We prove this theorem by checking each case of the cases mentioned in \cite[Theorem.\,1.2]{W}. 
We let $L^{1} = L(a_{1}a_{2}, R , -n)$ with $1 + a_{1}(a_{2} - n) < 0, L^{2} = 	L(a_{1}a_{2}, R , (-c_{1})(-c_{2}))$ with $a_{2} < c_{2}$ or $a_{2} = c_{2}$ and $ a_{1} > c_{1}$, and $L^{3} = 	L(a_{1}a_{2}a_{3}, R , -n)$ with $a_{3} <  n$. We also denote $\widehat{L^{i}}$ to be the link $L^{i}$ in which $R$ is replaced by one single positive crossing where $ i = 1,2,$ and 3 for now and the rest of the proof. The author of \cite{W} shows that the link $\widehat{L^{i}}$ is quasi-alternating at the only single crossing in the middle tangle and applies \cite[Theorem.\,2.1]{CK} to show that $L^{i}$ is quasi-alternating at any crossing of the tangle $R$ that replaces the only single crossing in the middle tangle in $\widehat{L^{i}}$. We show that the result holds for $\widehat{L^{i}}$ and then apply Theorem \ref{twist} at the only single crossing in the middle tangle to obtain the result for $L^{i}$. Now we smooth the links $\widehat{L^{i}}$ at the only single crossing in the middle tangle to obtain

\begin{align*}
\det(\widehat{L^{1}}) = \det(\widehat{L^{1}_{0}}) + \det(\widehat{L^{1}_{1}}) & = \det(T(2,-n)\# C(a_{1}, a_{2})) + \det(\widehat{L^{1}_{1}}) \\
& \geq n(1 + a_{1}a_{2}) + 1 = n + na_{1}a_{2} + 1 \geq a_{1} + a_{2} + 1 + n  = c(\widehat{D^{1}}).
\end{align*}

\begin{align*}
\det(\widehat{L^{2}}) = \det(\widehat{L^{2}_{0}}) + \det(\widehat{L^{2}_{1}}) & = \det(C(a_{1},a_{2})\# C(-c_{1}, -c_{2})) + \det(\widehat{L^{2}_{1}}) \\
& \geq (1 + a_{1}a_{2})(1 + c_{1}c_{2}) + 1 = 1  + a_{1}a_{2} + c_{1}c_{2} + a_{1}a_{2}c_{1}c_{2}  + 1 \\ & \geq a_{1} + a_{2} + 1 + c_{1} c_{2} = c(\widehat{D^{2}}).
\end{align*}

\begin{align*}
\det(\widehat{L^{3}}) = \det(\widehat{L^{3}_{0}}) + \det(\widehat{L^{3}_{1}}) & = \det(T(2,-n)\# C(a_{1}, a_{2}, a_{3})) + \det(\widehat{L^{3}_{1}}) \\
& \geq n(a_{1} + a _{3} + a_{1}a_{2}a_{3}) + 1 = na_{1} + na_{3} + na_{1}a_{2}a_{3} + 1 \\ & \geq a_{1} + a_{2} + a_{3} + 1 + n  = c(\widehat{D^{3}}).
\end{align*}
The above inequalities hold by Lemma \ref{simple} and since $\widehat{L^{i}_{1}}$ is quasi-alternating with $ \det(\widehat{L^{i}_{1}}) \geq 1$.
\end{proof}

\begin{thm}
The crossing number of the quasi-alternating Montesinos link $ L = L(a_{1}a_{2}a_{3}, R, \\ (-c_{1})(-c_{2})(-c_{3}))$ with $\frac{1 + a_{1}a_{2}}{a + c_{1}c_{2}} >\frac{a_{1} + a_{3} + a_{1}a_{2}a_{3}}{c_{1} + c_{3} + c_{1}c_{2}c_{3}} $ is less than or equal to its determinant.
\end{thm}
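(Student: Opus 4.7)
The plan is to follow the same strategy used in the previous theorem, namely to reduce to a simpler auxiliary link by replacing the tangle $R$ with a single positive crossing, establish the crossing-number/determinant inequality for this auxiliary link by smoothing at the middle crossing, and then extend to $L$ via Theorem \ref{twist}.

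Concretely, let $\widehat{L}$ denote the link $L$ with $R$ replaced by a single positive crossing in the middle tangle. By the argument used in \cite[Theorem.\,1.2]{W}, the link $\widehat{L}$ is quasi-alternating at this middle crossing, and consequently $L$ itself is quasi-alternating at every crossing of the tangle $R$ via \cite[Theorem.\,2.1]{CK}. Our goal reduces to verifying $c(\widehat{D}) \leq \det(\widehat{L})$ for the associated diagram $\widehat{D}$ of $\widehat{L}$, since the conclusion for $L$ will then follow immediately from Theorem \ref{twist}.

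To handle the auxiliary link, smooth $\widehat{L}$ at the unique middle crossing. The $0$-smoothing yields $\widehat{L}_0 = C(a_1, a_2, a_3) \# C(-c_1, -c_2, -c_3)$, while $\widehat{L}_1$ is again quasi-alternating and therefore has $\det(\widehat{L}_1) \geq 1$. By Proposition \ref{det} and the standard continued-fraction formula for the determinant of a rational link,
\[
\det(\widehat{L}_0) = (a_1 + a_3 + a_1 a_2 a_3)(c_1 + c_3 + c_1 c_2 c_3).
\]
Hence
\[
\det(\widehat{L}) \geq (a_1 + a_3 + a_1 a_2 a_3)(c_1 + c_3 + c_1 c_2 c_3) + 1,
\]
while $c(\widehat{D}) = a_1 + a_2 + a_3 + c_1 + c_2 + c_3 + 1$. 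It remains to check the purely numerical inequality $(a_1+a_3+a_1a_2a_3)(c_1+c_3+c_1c_2c_3) \geq a_1+a_2+a_3+c_1+c_2+c_3$, which follows from Lemma \ref{simple} together with the standing assumption that each $a_i, c_j \geq 2$; indeed, each factor on the left is then already at least $a_1+a_2+a_3+2$ (respectively $c_1+c_2+c_3+2$), so the product dominates the sum comfortably.

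The main obstacle I anticipate is bookkeeping rather than anything deep: one must be careful to identify the correct smoothing of the middle crossing so that $\widehat{L}_0$ really is the connected sum of the two outer rational tangle closures (this is where the sign conventions and the specific form of the Montesinos diagram used in \cite{W} matter), and to confirm that $\widehat{L}_1$ genuinely inherits quasi-alternateness from the construction. The hypothesis $\frac{1+a_1a_2}{1+c_1c_2} > \frac{a_1+a_3+a_1a_2a_3}{c_1+c_3+c_1c_2c_3}$ is not used in the determinant estimate itself; it enters only indirectly through Wu's classification as the criterion guaranteeing that $\widehat{L}$, and hence $L$, is quasi-alternating.
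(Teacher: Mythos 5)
Your proposal is correct and follows essentially the same route as the paper: replace $R$ by a single positive crossing to get $\widehat{L}$, invoke Wu's result and \cite[Theorem.\,2.1]{CK} for quasi-alternateness, smooth the middle crossing so that $\widehat{L}_0 = C(a_1,a_2,a_3)\,\#\,C(-c_1,-c_2,-c_3)$, bound $\det(\widehat{L})$ below by $(a_1+a_3+a_1a_2a_3)(c_1+c_3+c_1c_2c_3)+1 \geq c(\widehat{D})$ via Lemma \ref{simple}, and finish with Theorem \ref{twist}. The only difference is cosmetic: the paper passes through the intermediate bound $(a_1+a_2+a_3)(c_1+c_2+c_3)+1$, while you bound each factor by the corresponding sum plus $2$; both are the same elementary estimate.
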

\begin{proof}
We denote $\widehat{L}$ to be the link $L$ in which $R$ is replaced by one single positive crossing. The author of \cite{W} states that the link $\widehat{L}$ is quasi-alternating at the only single crossing in the middle tangle in \cite[Remark in page.\,8]{W} and applies \cite[Theorem.\,2.1]{CK} to show that $L$ is quasi-alternating at any crossing of the tangle $R$ that replaces the only single crossing in the middle tangle in $\widehat{L}$. We show that the result holds for $\widehat{L}$ and then apply Theorem \ref{twist} at the only single crossing in the middle tangle to obtain the result for $L$. Now we smooth the links $\widehat{L}$ at the only single crossing in the middle tangle to obtain

\begin{align*}
\det(\widehat{L}) = \det(\widehat{L_{0}}) + \det(\widehat{L_{1}}) & = \det( C(a_{1}, a_{2}, a_{3}) \# C(-c_{1},-c_{2},-c_{3})) + \det(\widehat{L_{1}}) \\
& \geq (a_{1} + a_{3} + a_{1}a_{2}a_{3})(c_{1} + c_{3} + c_{1}c_{2}c_{3}) + 1 \\ & \geq  (a_{1} + a_{3} + a_{2})(c_{1} + c_{3} + c_{2}) + 1 \\& \geq a_{1} + a_{2} + a_{3} + c_{1} + c_{2} + c_{3} + 1 = c(\widehat{D}).
\end{align*}
The above inequalities hold by Lemma \ref{simple} since $\widehat{L_{1}}$ is quasi-alternating with $ \det(\widehat{L_{1}}) \geq 1$.
\end{proof}
\section{Conjecture and Closing Argument and remarks} We close this with the following conjecture.
\begin{conj}\label{main}
For any quasi-alternating link $L$, we have \begin{equation}\label{one}
c(L) \leq \det(L).
\end{equation}
\end{conj}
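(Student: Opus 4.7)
The plan is to proceed by strong induction on the determinant of $L$, which is well-founded since the definition of quasi-alternating forces both smoothings $L_0, L_1$ to satisfy $\det(L_i) < \det(L)$. The base case is the unknot, for which $c(L) = 0 \leq 1 = \det(L)$. For the inductive step, suppose $L$ is quasi-alternating at a crossing $c$ of some diagram $D$, so that $\det(L) = \det(L_0) + \det(L_1)$ with $\det(L_i) \geq 1$ and each $L_i$ quasi-alternating of strictly smaller determinant. The inductive hypothesis then yields $c(L_i) \leq \det(L_i)$.

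The crux of the plan is to produce a diagram $D^{*}$ of $L$ that is simultaneously minimal (so $c(D^{*}) = c(L)$) and quasi-alternating at some crossing. Given such $D^{*}$, let $D^{*}_0$ be its smoothing at that crossing; then $c(L) = c(D^{*}) = c(D^{*}_0) + 1 \geq c(L_0) + 1$, and combining with $\det(L_1)\geq 1$ and the inductive bound on $L_0$ gives
\[
c(L) \leq c(L_0) + 1 \leq \det(L_0) + \det(L_1) = \det(L),
\]
completing the induction. This is precisely the mechanism already exploited in Theorem~\ref{twist} and in the pretzel/Montesinos cases treated above: in each setting the classification supplies a diagram which is manifestly minimal and quasi-alternating at a distinguished crossing, and the induction goes through verbatim.

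The principal obstacle is the missing lemma that \emph{every} quasi-alternating link admits a minimal diagram which is quasi-alternating at some crossing. This is unknown in general, and appears to be of essentially the same depth as the conjecture itself, since it would also settle whether the crossing number behaves well under the recursive decomposition. A weaker and perhaps more tractable intermediate goal would be to show that if $D$ is quasi-alternating at $c$ and is non-minimal, then one can apply a crossing-reducing sequence of Reidemeister moves while preserving the existence of a quasi-alternating crossing somewhere. Combined with the inductive step above, this would suffice.

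If the diagrammatic route proves intractable, an alternative is to exploit the fact that the reduced Khovanov homology of a quasi-alternating link is thin of total rank $\det(L)$, and to seek a lower bound on the Khovanov support (or on the breadth of the Jones polynomial) in terms of $c(L)$ valid for thin homology. Kauffman's state-sum and the $s$-invariant provide natural tools in this direction. Either route, however, must confront the same essential tension: quasi-alternatingness is defined recursively on \emph{some} diagram, whereas $c(L)$ is the infimum over \emph{all} diagrams, and reconciling these two perspectives is the main difficulty.
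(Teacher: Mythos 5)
The statement you are asked to prove is stated in the paper only as a conjecture; the paper itself offers no proof, and its actual content is a verification of the inequality for the families of links currently known to be quasi-alternating (alternating links via the Tait graph, and certain pretzel and Montesinos links via Theorem~\ref{twist}). So there is no complete argument in the paper to match yours against, and your proposal, as you yourself concede, is not a proof: the ``missing lemma'' that every quasi-alternating link admits a \emph{minimal} diagram that is quasi-alternating at some crossing is exactly the open content of the problem, and the paper gives no route to it.

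Beyond the gap you acknowledge, there is a second, unacknowledged error in your inductive step. From $c(L) = c(D^{*}) = c(D^{*}_0) + 1$ and the fact that $D^{*}_0$ is merely \emph{some} diagram of $L_0$, you correctly get $c(L) \geq c(L_0) + 1$, a lower bound; but the next line uses $c(L) \leq c(L_0) + 1$, which does not follow and is the wrong direction. What you actually need is $c(D^{*}_0) \leq \det(L_0)$, and the inductive hypothesis $c(L_0) \leq \det(L_0)$ does not give this, because the smoothing of a minimal diagram of $L$ need not be a minimal diagram of $L_0$. This is precisely why the paper's Theorem~\ref{twist} is formulated at the level of diagrams --- its hypothesis is $c(D) \leq \det(L)$ for the quasi-alternating diagram $D$ itself, and the conclusion $c(L') \leq \det(L')$ is extracted only at the end via $c(L') \leq c(D')$. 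Any induction of the kind you propose must therefore carry the diagrammatic inequality $c(D) \leq \det(L)$ through the recursion, and the obstruction is then that the quasi-alternating witnesses for $L_0$ and $L_1$ in the definition need not be the smoothings $D_0$ and $D_1$ of $D$, so the recursion still does not close. Your closing observation --- that the tension is between a property defined on some diagram and an invariant defined as an infimum over all diagrams --- is the right diagnosis, but the proposal does not overcome it.
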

In support to this conjecture, it holds for alternating links as a special class of quasi-alternating links by Proposition \ref{basic}. Also, the author of  \cite{G} in Proposition 3.2 proves the above conjecture in very special cases where the determinant is less than or equal 3.

The above argument in the previous section shows that inequality \ref{one} holds for infinitely many quasi-alternating links that are not alternating links.

This conjecture is not a characterization of quasi-alternating links since the converse of this conjecture is not true. In particular, the knots $9_{46}$ and $11n 139$ satisfy inequality \ref{one} and they are not quasi-alternating. The above knots are not quasi-alternating because Shumakovitch shows 
in \cite{S} that the knot $9_{46}$ has torsion in its odd Khovanov homology groups and Green in \cite[Theorem.\,3.1]{G} proves that the knot $11n 50$ does not bound a negative definite 4-manifold with torsion-free.

Also in support of the above conjecture, we provide a table of all knots up to 11 crossings using \cite{CL} that don't satisfy inequality \ref{one}. Also, we checked by hand that every knot appears in that table is not quasi-alternating since each knot is homologically thick in rational Khovanov homology using \cite{BM} except for the knots $10_{140}$ and $11n 139$ which are pretzel knots of the form $P(p,3,-3)$ with $p = 4$ or 5 (see table \ref{1}).
Moreover, we use \cite{CL} to provide two tables the first one of knots with 12 crossings and the second one of links up to 9 or less crossings that we conjecture to be not quasi-alternating based on inequality \ref{one} (see table \ref{2} and table \ref{3} respectively).

\begin{table}[h]
\centering
\begin{tabular}{|c|c||c|c||c|c|}
\hline
Knot & Determinant & Knot & Determinant & Knot & Determinant  \\
\hline
$8_{19}$ & 3 & $9_{42}$ & 7 & $10_{124}$ & 1 \\
\hline
$10_{132}$ & 5 & $10_{139}$ & 9 & $10_{140}$  & 9 \\
\hline
$10_{145}$ & 3 & $10_{153}$ & 1 & $10_{161}$ & 5  \\
\hline
$11n 9$ & 5 &  $11n 19$ & 5 & $11n 31$& 3 \\
\hline
$11n 34$ & 1 & $11n 38$ & 3 & $11n 42$& 1 \\
\hline
$11n 49$ & 1 & $11n 57$ & 7 & $11n 67$ & 9 \\
\hline
$11n 73$ & 9 & $11n 74$ & 9 & $11n 96$ & 7 \\
\hline
$11n 97$ & 9 & $11n 102$ & 3 & $11n 104$ & 3 \\
\hline
$11n 111$ & 7 & $11n 116$ & 1 & $11n 135$ & 5 \\
\hline
$11n 139$ & 9 & $11n 143$ & 9 & $11n 145$ & 9 \\
\hline
\end{tabular}

\vspace{0.2cm}
\caption{Knot table}
\label{1}
\end{table}

\begin{table}[h]
\centering
\begin{tabular}{|c|c||c|c||c|c|}
\hline
Knot & Determinant &  Knot & Determinant & Knot & Determinant \\
\hline
$12n 0019$ & 1 & $12n 0268$ & 9 & $12n 0473$& 1 \\
\hline
$12n 0023$ & 9 & $12n 0273$ & 5 & $12n 0475$& 7 \\
\hline
$12n 0025$ & 11 & $12n 0292$ & 1 & $12n 0487$ & 11 \\
\hline
$12n 0031$ & 9 & $12n 0293$ & 7 & $12n 0488$ & 5 \\
\hline
$12n 0051$ & 9 & $12n 0309$ & 1 & $12n 0502$ & 9 \\
\hline
$12n 0056$ & 9 & $12n 0313$ & 1 & $12n 0519$ & 7 \\
\hline
$12n 0057$ & 9 & $12n 0318$ & 1 & $12n 0552$ & 9 \\
\hline
$12n 0093$ & 11 & $12n 0321$ & 11 & $12n 0574$ & 9 \\
\hline
$12n 0096$ & 7 & $12n 0332$ & 9 & $12n 0575$ & 3 \\
\hline
$12n 0115$ & 11 & $12n 0336$ & 5 & $12n 0579$ & 9 \\
\hline
$12n 0118$ & 7 & $12n 0352$ & 7 & $12n 0582$ & 9 \\
\hline
$12n 0121$ & 1 & $12n 0355$ & 11 & $12n 0591$ & 7 \\
\hline
$12n 0124$ & 7 & $12n 0370$ & 5 & $12n 0605$ & 9 \\
\hline
$12n 0129$ & 7 & $12n 0371$ & 11 & $12n 0617$ & 5 \\
\hline
$12n 0138$ & 11 & $12n 0374$ & 11 & $12n 0644$ & 7 \\
\hline
$12n 0149$ & 5 & $12n 0386$ & 9 & $12n 0648$ & 11 \\
\hline
$12n 0175$ & 3 & $12n 0402$ & 9 & $12n 0655$ & 3 \\
\hline
$12n 0199$ & 11 & $12n 0403$ & 9 & $12n 0673$ & 5 \\
\hline
$12n 0200$ & 9 & $12n 0404$ & 3 & $12n 0676$ & 9 \\
\hline
$12n 0210$ & 1 & $12n 0419$ & 3 & $12n 0689$ & 7 \\
\hline
$12n 0214$ & 1 & $12n 0430$ & 1 & $12n 0725$ & 5 \\
\hline
$12n 0217$ & 5 & $12n 0433$ & 11 & $12n 0749$ & 7 \\
\hline
$12n 0221$ & 9 & $12n 0439$ & 3 & $12n 0812$ & 9 \\
\hline
$12n 0242$ & 1 & $12n 0446$ & 7 & $12n 0815$ & 7 \\
\hline
$12n 0243$ & 5 & $12n 0457$ & 11 & $12n 0851$ & 5 \\
\hline
\end{tabular}
\vspace{0.2cm}
\caption{Knot table}
\label{2}
\end{table}

\begin{table}[h]
\centering
\begin{tabular}{|c|c||c|c||c|c|}
\hline
Link & Determinant &  Link & Determinant & Link & Determinant \\
\hline
$L8n3$ & 4 &  $L9n4$ & 4  & $L9n18$ & 2   \\
\hline
$L8n6$ & 0 & $L9n9$ & 4   & $L9n19$ & 0  \\
\hline
$L8n8$ & 0 & $L9n12$ & 4  & $L9n21$ & 4 \\
\hline
$L9n3$ & 8 &  $L9n15$ & 2  &   $L9n27$ & 0\\
\hline
\end{tabular}
\vspace{0.2cm}
\caption{Link table}
\label{3}
\end{table}

The following remark shows how useful this conjecture if we can prove it for general quasi-alternating links.
\begin{rem}
If the above conjecture is true.
\begin{enumerate}
    \item This is an easy obstruction for quasi-alternateness compared to the other obstructions mentioned in the introduction.
    \item  It would solve the conjecture mentioned in \cite{G} that states that there are only finitely many quasi-alternating links with a given determinant.
    \item It will imply \cite[Theorem.\,2]{GW} and \cite[Proposition.\,3]{GW}. In particular, it will show that there are finitely many Kanenobu's knots that are quasi-alternating.
\end{enumerate}
\end{rem}

Finally, we hope to prove this conjecture in future work.
\section{Acknowledgment}
The first author thanks the Abdus Salam international centre for theoretical Physics and the Max Planck Institute for Mathematics for the kind hospitality during the course of this work. Also, the second author thanks Prof. Thomas Zasalvelsky for helpful communications.


\begin{thebibliography}{W}
\bibitem [BM]{BM} D. Bar-Natan and S. Morrison, \textsl{The Mathematica package KnotTheory. The Knot Atlas}, http://katlas.math.toronto.edu/wiki/.
\bibitem [CK]{CK} A. Champanerkar and Ilya Kofman, \textsl{Twisting Quasi-alternating Links}, Proc. Amer. Math. Soc., 137(7):2451--2458, 2009.
\bibitem [CL]{CL} J. C. Cha and C. Livingston, \textsl{KnotInfo: Table of Knot Invariants}, http://www.indiana.edu/~knotinfo,  September 16, 2010.
\bibitem [G]{G} J. Greene,\textsl{ Homologically thin, non-quasi-alternating links}, Math. Res. Lett., 17(1):39-49, 2010.
\bibitem [GW]{GW} J. Greene and L. Watson, \textsl{Turaev Torsion, definite 4-manifolds, and quasi-alternating knots}, Preprint, arXiv:1106.5559.


\bibitem [Ka]{Ka} L. Kauffman, \textsl{State models and the Jones polynomial}, Topology 26 (3):395--407, (1987).
\bibitem [L2]{L2}
R. Lickorish, \textsl{An Introduction to Knot Theory}, Springer-Verlag, 1997.

\bibitem [LM]{LM} W. Lickorish and B. Millett, \textsl{The new polynomial invariants of knots and links}, Math. Mag. 61:1--23, (1988).
\bibitem [M]{M} K. Murasugi, \textsl{Jones Polynomials and classical conjectures in knot theory}, Topology 26 (2):187--194, (1987).

\bibitem [MO]{MO} C. Manolescu and P. Ozsv$\acute{a}$th. \textsl{On the Khovanov and knot Floer homologies of quasi-alternating links}. In Proceedings of G$\ddot{o}$kova Geometry-Topology Conference (2007),  60-81. G$\ddot{o}$kova Geometry/Topology Conference (GGT), G$\ddot{o}$kova, 2008.
\bibitem [ORS]{ORS} P. Ozsv$\acute{a}$th, J. Rasmussen and Z. Szab$\acute{o}$, \textsl{Odd Khovanov homology}. arXiv:0710.4300, 2008.
\bibitem [OS]{OS} P. Ozsv$\acute{a}$th and Z. Szab$\acute{o}$, \textsl{On the Heegaard Floer homology of branched double-covers}, Adv. Math., 194(1):1-33, 2005.
\bibitem [OSS]{OSS} B. Owens and S. Strle, \textsl{Rational homology spheres and the four-ball genus of knots}, Adv. Math., 200(1):196--216, 2006.
\bibitem [S]{S} A. Shumakovitch. KhoHo pari package. \textsl{www.geometric.ch/KhoHo/}, 2009.
\bibitem [T]{T} M. Thistlethwaite, \textsl{A spanning tree expansion for the Jones polynomial}, Topology 26 (3):297--309, (1987).
\bibitem [W]{W} T. Widmar, \textsl{Quasi-alternating Montesinos links}, J. Knot Theory Ramifications, 18(10):1459--1469, 2009.
\end{thebibliography}
\end{document}